\numberwithin{equation}{section} 
\newtheorem{thm}{Theorem}[section]
\newtheorem{prop}[thm]{Proposition}
\theoremstyle{definition}
\theoremstyle{remark}
\begin{document}

\title[Measurable bundles of Banach algebras]
{Measurable bundles of Banach algebras}

\author{Inomjon Ganiev}
\address{Inomjon Ganiev\\
 Department of Science in Engineering\\
Faculty of Engineering, International Islamic University Malaysia\\
P.O. Box 10, 50728\\
Kuala-Lumpur, Malaysia}  \email{{\tt inam@iium.edu.my}, {\tt
ganiev1@rambler.ru}}

\author{Farrukh Mukhamedov}
\address{Farrukh Mukhamedov\\
 Department of Computational \& Theoretical Sciences\\
Faculty of Science, International Islamic University Malaysia\\
P.O. Box, 141, 25710, Kuantan\\
Pahang, Malaysia} \email{{\tt far75m@yandex.ru}, {\tt
farrukh\_m@iium.edu.my}}

\author{Karimbergen Kudaybergenov}

\address[Karimbergen Kudaybergenov]{Department of Mathematics, Karakalpak state
university,  230113 Nukus, Uzbekistan.} \email{karim2006@mail.ru}

\begin{abstract}

In the present paper we investigate  Banach--Kantorovich
 algebras over faithful solid subalgebras of algebras measurable
 functions. We prove that any Banach--Kantorovich
 algebra over faithful solid subalgebras of algebra measurable
 functions represented as a measurable bundle of Banach algebras
 with vector-valued lifting.
We apply such representation to the spectrum of elements
Banach--Kantorovich algebras.

 \vskip 0.3cm \noindent  {\it Keywords:} measurable bundle;
Banach--Kantorovich module; lattice; Banach-Mazur Theorem.
\\

{\it AMS Subject Classification:} 47A35, 17C65, 46L70, 46L52, 28D05.
\end{abstract}

\maketitle

\section{Introduction}

It is known that the theory of
 Banach bundles  stemming  from paper  \cite{Neu},
where it is was shown that such a  theory  had vast
 applications in analysis. The study of Banach lattices in terms of sections of continuous
Banach bundles has been started by Giertz (see \cite{Ge}).  Later
Gutman \cite{Gu1} created the theory of continuous Banach bundles
and measurable Banach bundles admitting lifting \cite{Gu}. A portion
of the Gutman's theory was specified in the case of bundles of
measurable Banach lattices by Ganiev \cite{Ga} and Kusraev
\cite{K3}.

Nowadays the methods of Banach bundles has many applications in the
operator algebras \cite{Alb2, AKA, GM}. In \cite{GC1} it was
considered $ C^\ast$-algebras over ring of all measurable functions
and it has been shown that any $C^\ast$-algebra over a ring
measurable functions can be represented as a measurable bundle of
$C^\ast$-algebras. Some application of this representation to
ergodic theorems have been studied in \cite{GM2}.

It is known \cite{Sa} that one of the important results in the
theory of $C^\ast$-algebras is the Gelfand--Naimark's theorem, which
describes commutative $C^\ast$-algebras over the complex field
$\mathbb{C}$ as an algebra of complex valued continuous functions
defined on the set of all pure states of given $C^\ast$-algebra. In
\cite{CGK1} it has been proved a vector version of the
Gelfand--Naimark's theorem for commutative $C^\ast$-algebras over a
ring measurable functions. GNS-representation  for such
$C^\ast$-algebras was obtained in \cite{CGK2} .

In section 2 we consider  a Banach--Kantorovich algebra over a
faithful solid subalgebras of the algebra measurable functions. We
prove a Banach--Kantorovich algebra over a faithful solid
subalgebras represented as measurable bundle of banach algebras.
Note that in \cite{GM1} $C^*$-algebras over ideals of $L_0$ have
been considered.

In section 3 we prove  a vector version of the Gelfand--Mazur's
theorem.

\section{Measurable bundles of Banach algebras}

Let $(\Omega, \Sigma, \mu)$ be a measure space with a finite measure
$\mu$ and let $L^0(\Omega)$ be the algebra of equivalence classes of
all complex measurable functions on $\Omega.$ Let
$L^{\infty}(\Omega)$ be the algebra of all equivalence classes of
bounded complex measurable functions on $\Omega$
 with the norm
$\|\hat{f}\|_{L^{\infty}(\Omega)}= \inf\{\alpha>0: |\hat{f}|\leq
\alpha\mathbf{1}\}$, here $\mathbf{1}$ is the unit function, i.e.
$\mathbf{1}(\omega)=1$ for all $\omega\in\Omega$.

A complex linear space $X$ is said to be normed by $L^0(\Omega)$ if
there is a map $\|\cdot\|:X\rightarrow L^0(\Omega)$ such that for
any $x, y \in X,$ $\lambda\in \mathbb{C}$ the following conditions
are fulfilled:
\begin{enumerate}
\item[(1)]  $\|x\|\geq 0;$ $\|x\|=0 \Leftrightarrow x=0;$
\item[(1)]  $\|\lambda x\|=|\lambda|\|x\|;$
\item[(3)]  $\|x+y\|\leq\|x\|+\|y\|.$
\end{enumerate}
 The pair $(X, \|\cdot\|)$ is called a
\textit{lattice-normed} space over $L^0(\Omega).$  A
lattice-normed space $X$  is called $d$-\textit{decomposable} if
for any $x\in X$  with $\|x\|=\lambda_1+\lambda_2,$
$0\leq\lambda_1, \lambda_2\in L^0(\Omega),$ $\lambda_1
\lambda_2=0$ there exist $x_1, x_2 \in X$ such that $x = x_1 +
x_2$  and $\|x_k\| = \lambda_k,  k = 1, 2.$

A net $(x_\alpha)$ in $X$ is $(bo)$-converging to $x \in X,$  if
$\|x-x_\alpha\|\stackrel{(o)}{\longrightarrow}0$  (note that the
order convergence in $L^0(\Omega)$ coincides with convergence
almost everywhere). A lattice-normed space $X$ which is
$d$-decomposable and complete with respect to $(bo)$-convergence
is called a \textit{Banach–-Kantorovich space}. It is known that
every Banach–-Kantorovich space $X$  over $L^0(\Omega)$ is a
module over $L^0(\Omega)$  and $\|ax\|=|a|\|x\|$ for all $a\in
L^0(\Omega), x\in X$  (see \cite{K}).

Let $E$ be a \textit{faithful solid subalgebra} in $L^0(\Omega),$
i.e. the inequality $|x|\leq |y|$ implies $x\in E$ for arbitrary
$x\in L^0(\Omega),$  $y\in E$ and $E^\perp=\{0\}.$ Note that one has
$L^\infty\subset E\subset L^0(\Omega)$.  Consider an arbitrary
algebra $\mathcal{U}$ over the field $\mathbb{C}$ such that
$\mathcal{U}$ is a module over $E,$ i.e. $(a u)v=a(uv)=u(a v)$ for
all $a\in E,\, u,v\in\mathcal{U}.$ Consider $E$-valued norm
$\|\cdot\|$ on $\mathcal{U}$ which endows $\mathcal{U}$ with
Banach--Kanorovich structure, in particularly, one has $\|a
u\|=|a|\|u\|$ for all $a \in E,\, u\in\mathcal{U}.$

An algebra $\mathcal{U}$ is called \textit{Banach--Kantorovich
algebra} over $E,$ if for every $u, v\in\mathcal{U}$ one has $\|u
v\|\leq\|u\|\|v\|.$ If $\mathcal{U}$ is a Banach--Kantorovich
algebra over $E$ with unit $e$ such that $\|e\|=\textbf{1},$ where
$\textbf{1}$ is the unit in $E,$ then $\mathcal{U}$ is called
\textit{unital Banach--Kantorovich algebra}.

{\sc Example}. Let us provide an example of Banach--Kantorovich
algebra over $E$. To do this, let us recall some definitions taken
from \cite{CGK2}. Consider a modulus $\mathcal{A}$ over $E$, here as
before, $E$ stands for faithful solid subalgebra of $L^0(\Omega)$. A
mapping
$\langle\cdot,\cdot\rangle:\mathcal{A}\times\mathcal{A}\mapsto E$ is
called {\it $E$-valued inner product}, if for every
$x,y,z\in\mathcal{A}$, $\l\in E$ one has $\langle x,x\rangle\geq 0$;
$\langle x,x\rangle=0$ if and only if $x=0$; $\langle
x,y\rangle=\overline{\langle y,x\rangle}$;  $\langle \l
x,y\rangle=\l\langle y,x\rangle$; $\langle x+y,z\rangle=\langle
x,z\rangle+\langle y,z\rangle $.

If $\langle\cdot,\cdot\rangle:\mathcal{A}\times\mathcal{A}\mapsto E$
is a {\it $E$-valued inner product}, the formula
$\|x\|=\sqrt{\langle x,x\rangle}$ defines a $d$-decomposable
$E$-valued norm on $\mathcal{A}$. Then the pair
$(\mathcal{A},\langle\cdot,\cdot\rangle)$ is called {\it
Hilbert-Kaplansky modules}, if $(\mathcal{A},\|\cdot\|)$ is BKS over
$E$. Let $A$ and $F$ be BKS over $E$. An operator $T:A\to F$ is
called {\it $E$-linear}, if one has $T(\alpha x+\beta y)=\alpha
T(x)+\beta T(y)$ for every $x,y\in A$, $\alpha,\beta\in E$. A linear
operator $T$ is called {\it $E$-bounded} if there exists $c\in E$
such that $\|T(x)\|\leq c\|x\|$ for every $x\in A$. For $E$-linear
and $E$-bounded operator $T$ one defines $\|T\|=\sup\{\|T(x)\|:\
\|x\|\leq \mathbf{1}\}$, which is a norm of $T$ (see \cite{K}).

Now let $\mathcal{A}$ be a Hilbert-Kaplansky modulus over $E$. By
$B(\mathcal{A})$ we denote the set of $E$-linear, $E$-bounded
operators on the Hilbert-Kaplansky modules $\mathcal{A}$ over $E$.
Then $B(\mathcal{A})$ is a Banach-Kantorovich algebra over $E$.\\

We shall consider a map $\mathcal{X}:\Omega \rightarrow
X(\omega),$  where $X(\omega)\neq\{0\},$  is a Banach algebra for
all $\omega\in \Omega.$  A function $u$ is called a
\textit{section} of $\mathcal{X}$ if it is defined on $\Omega$
almost everywhere and takes a value $u(\omega)\in X(\omega)$  for
all $\omega\in \mbox{dom}\, u,$  where $\mbox{dom}\, u$ is the
domain of $u.$ Let $L$ be some set of sections.

 A pair $(\mathcal{X}, L)$ is called a
\textit{measurable  bundle of banach algebras}, if
\begin{enumerate}
\item[(1)] $\lambda_1c_1+\lambda_2c_2\in L$ for all $\lambda_1, \lambda_2\in \mathbb{C},$ $c_1,
c_2\in L,$ where $\lambda_1c_1+\lambda_2c_2:\omega\in
\mbox{dom}\,c_1\cap\mbox{dom}\,c_2\rightarrow
\lambda_1c_1(\omega)+\lambda_2c_2(\omega);$
\item[(2)] the function  $c:\omega\in
\mbox{dom}\,c\rightarrow \|c\|_{X(\omega)}$ is measurable for all
$c\in L;$
\item[(3)] the set  $\{c(\omega): c\in L, \omega\in
\mbox{dom}\,c\}$ is dense in $X(\omega)$ for all $c\in L;$
\item[(4)]  $uv\in L$ for all  $u,  v\in L,$
where $u  v:\omega\in \mbox{dom}\,(u)\cap
\mbox{dom}\,(v)\rightarrow
   u(\omega) v(\omega)$.
\end{enumerate}
A section $s$ is called \textit{simple}, if there exists $c_i, A_i
\in \Sigma,$ $i\in \overline{1, n}$ such that
$$
s(\omega)=\sum\limits_{i=1}^n \chi_{A_i}c_i.
$$
A section $u$ is called \textit{measurable} if there exists a
sequence $(s_n)_{n\in\mathbb{N}}$  of simple sections such that
$\|s_n(\omega)-u(\omega)\|_{X(\omega)}\rightarrow 0$ for almost
all $\omega\in \Omega.$
 We denote by $M(\Omega, \mathcal{X})$ the set of all
measurable sections and $L^0(\Omega, \mathcal{X})$ denotes the
factorization of this set with respect to equality almost
everywhere. By $\hat{u}$ we denote the class from $L^0(\Omega,
\mathcal{X}),$ containing section $u\in  M(\Omega, \mathcal{X}).$
A function $\omega\rightarrow \|u(u(\omega)\|_{X(\omega)}$  is
measurable for all $u\in  M(\Omega, \mathcal{X}).$  By
$\|\hat{u}\|$ we denote the element in $L^0(\Omega),$ containing
the function $\|u(\omega)\|_{X(\omega)}.$ For $\hat{u}, \hat{v}\in
L^0(\Omega)$ we put $\hat{u}\cdot\hat{v}=\widehat{u(\omega)\cdot
v(\omega)}.$

Set
$$
E(\Omega, \mathcal{X})=\{x\in L^0(\Omega, \mathcal{X}): \|x\|\in
E\}.
$$
It is known \cite{Gu}  that $E(\Omega, \mathcal{X})$ is a
Banach--Kantorovich space over $E.$ Since $X(\omega)$  is a Banach
algebra  we get
\begin{eqnarray*}
\|\hat{u} \hat{v}\|(\omega)&=&\|u(\omega)
v(\omega)\|_{X(\omega)}\leq \|u(\omega)\|_{X(\omega)}
\|v(\omega)\|_{X(\omega)}\\ &=&
\|u(\omega)\|_{X(\omega)}\|v(\omega)\|_{X(\omega)} =
\|\hat{u}\|\|\hat{v}\|(\omega)
\end{eqnarray*}
for almost all $\omega\in\Omega.$ Thus $\|\hat{u} \hat{v}\|\leq
\|\hat{u}\|\|\hat{v}\|. $ Hence, $(E(\Omega, \mathcal{X}),
\|\cdot\|)$ is a Banach--Kantorovich algebra over $E.$

So, we obtain the following

\begin{prop}\label{1}
If $\mathcal{X}$ is a measurable bundle of Banach algebras, then
$(E(\Omega, \mathcal{X}), \|\cdot\|)$  is a Banach--Kantorovich
algebra over $E.$
\end{prop}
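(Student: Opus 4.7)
The plan is to verify the remaining algebra axioms on top of the fact, already quoted from Gutman \cite{Gu}, that $(E(\Omega,\mathcal{X}),\|\cdot\|)$ is a Banach--Kantorovich space over $E$. There are three things to check: (i) the fibrewise product defines a binary operation on $E(\Omega,\mathcal{X})$; (ii) this operation is well-defined on equivalence classes and turns $E(\Omega,\mathcal{X})$ into an algebra over $\mathbb{C}$; (iii) the lattice norm is submultiplicative.

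For (i), I take $\hat u,\hat v\in E(\Omega,\mathcal{X})$ with representatives $u,v\in M(\Omega,\mathcal{X})$ and form the pointwise product $(uv)(\omega):=u(\omega)v(\omega)$ on $\mathrm{dom}\,u\cap\mathrm{dom}\,v$. I must show $uv\in M(\Omega,\mathcal{X})$. Pick simple-section approximants $s_n\to u$ and $t_n\to v$ pointwise a.e.~in the fibre norms. Using property (4) of a measurable bundle, the products $s_nt_n$ are again sections in $L$, and a short calculation, splitting
\[
\|s_n(\omega)t_n(\omega)-u(\omega)v(\omega)\|_{X(\omega)}\le \|s_n(\omega)-u(\omega)\|_{X(\omega)}\|t_n(\omega)\|_{X(\omega)}+\|u(\omega)\|_{X(\omega)}\|t_n(\omega)-v(\omega)\|_{X(\omega)},
\]
shows $s_nt_n\to uv$ a.e., so $uv$ is measurable. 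To secure membership in $E(\Omega,\mathcal{X})$ I then use the solid subalgebra structure of $E$: since $\|u\|_{X(\omega)},\|v\|_{X(\omega)}$ represent elements of $E$, their product $\|\hat u\|\|\hat v\|$ lies in $E$ (because $E$ is an algebra), and by the fibrewise inequality $\|uv\|_{X(\omega)}\le \|u\|_{X(\omega)}\|v\|_{X(\omega)}$ together with solidity of $E$ in $L^0(\Omega)$, one obtains $\|\widehat{uv}\|\in E$.

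For (ii), independence of the product on the choice of representatives is immediate since changing $u$ or $v$ on a null set changes $uv$ on a null set. The algebra axioms (associativity, distributivity, compatibility with scalar and $E$-module structure) all reduce to their fibrewise analogues and hold pointwise because each $X(\omega)$ is a Banach algebra; in particular $(\alpha\hat u)\hat v=\alpha(\hat u\hat v)=\hat u(\alpha\hat v)$ for $\alpha\in E$, following from $(\alpha u)(\omega)v(\omega)=\alpha(\omega)(u(\omega)v(\omega))$ in $X(\omega)$.

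For (iii), the submultiplicativity is exactly the display computation already shown in the excerpt: pointwise a.e., $\|(uv)(\omega)\|_{X(\omega)}\le \|u(\omega)\|_{X(\omega)}\|v(\omega)\|_{X(\omega)}$, hence $\|\hat u\hat v\|\le \|\hat u\|\|\hat v\|$ in $E$. I expect the only real obstacle to be step (i), specifically the argument that $uv$ is a measurable section and that its norm lies in $E$; everything else is a routine passage from the fibrewise Banach algebra structure to the global $E$-valued one, using density of $L$ in each fibre (property (3)) to approximate by simple sections.
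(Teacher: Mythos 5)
Your proposal is correct and follows essentially the same route as the paper: the Banach--Kantorovich space structure is quoted from Gutman, and submultiplicativity of the $E$-valued norm is obtained from the fibrewise inequality $\|u(\omega)v(\omega)\|_{X(\omega)}\le\|u(\omega)\|_{X(\omega)}\|v(\omega)\|_{X(\omega)}$. The only difference is that you explicitly verify the points the paper leaves implicit --- that $uv$ is a measurable section (via products of simple sections and property (4)) and that $\|\widehat{uv}\|\in E$ by solidity --- which is a welcome but routine supplement to the published argument.
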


Let $\mathcal L^{\infty}(\Omega)$ be the set of all bounded
measurable functions on $\Omega$ with the norm
$$
\|f\|_{\mathcal L^{\infty}(\Omega)}=\inf\{\alpha>0: |f(\omega)|\leq
\alpha\quad\mbox{for almost all}\quad\omega\in \Omega\}.
$$
As before, by $L^{\infty}(\Omega)$ stands for the algebra of all
equivalence classes of  bounded complex measurable functions on
$\Omega$
 with the norm
$$\|\hat{f}\|_{L^{\infty}(\Omega)}= \inf\{\alpha>0: |\hat{f}|\leq
\alpha\mathbf{1}\}.$$ Set
$$
\mathcal L^{\infty}(\Omega,\mathcal{X})=\{u\in M(\Omega,
\mathcal{X}):\|u(\omega)\|_{X(\omega)}\in \mathcal
L^{\infty}(\Omega)\}
$$
 and
 $$
 L^{\infty}(\Omega,\mathcal{X})=\{\hat{u}\in
L^0(\Omega,\mathcal{X}): \|\hat{u}\|\in L^{\infty}(\Omega)\}.
$$
One can define the spaces $\mathcal
{L^{\infty}}(\Omega,\mathcal{X})$ and
$L^{\infty}(\Omega,\mathcal{X})$ with real-valued norms
$\|u\|_{\mathcal
L^{\infty}(\Omega,\mathcal{X})}=\sup\limits_{\omega\in
\Omega}|u(\omega)|_{X(\omega)}$ and
$\|\hat{u}\|_{\infty}=\bigg\|\|\hat{u}\|\bigg\|_{L^{\infty}(\Omega)},$
respectively.

It is known \cite{Gu}, \cite{K} that there is a homomorphism
 $p:L^{\infty}(\Omega)\rightarrow\mathcal{L^{\infty}}(\Omega)$
 being
 a lifting  such that
\begin{enumerate}
\item[1.] $p(\hat{f})\in \hat{f}$ and $\mbox{dom}\,
p(\hat{f})=\Omega;$

\item[2.] $\|p(\hat{f})\|_{\mathcal
L^{\infty}(\Omega)}=\|\hat{f}\|_{L^{\infty}(\Omega)}.$
\end{enumerate}

 The homomorphism $p$ is usually called a \emph{lifting} from
$L^{\infty}(\Omega)$ to $\mathcal{L^{\infty}}(\Omega).$

 The map
$\ell_\mathcal{X}:L^{\infty}(\Omega,\mathcal{X})\rightarrow
     {\mathcal{L}}^{\infty}(\Omega,\mathcal{X})$
     is called \emph{a vector-valued lifting }(associated with $p$), if
     for all
      $\hat{u},\hat{v}\in L^{\infty}(\Omega,\mathcal{X})$ and $a\in
      L^{\infty}(\Omega)$ the following conditions are satisfied:
\begin{enumerate}
\item[1.]  $\ell_\mathcal{X}(\hat{u})\in \hat{u},\, \mbox{dom}\,(\ell_\mathcal{X}(\hat{u}))=\Omega$;

\item[2.]
$\|\ell_\mathcal{X}(\hat{u})(\omega)\|_{X(\omega)}=p(\|\hat{u}\|)(\omega);$

\item[3.] $\ell_\mathcal{X}(\hat{u}+\hat{v})=\ell_\mathcal{X}(\hat{u})+\ell_\mathcal{X}(\hat{v});$

\item[4.]  $\ell_\mathcal{X}(a \hat{u})=p(a)\ell_\mathcal{X}(\hat{u});$

\item[5.]  $\ell_\mathcal{X}(\hat{u}\hat{v})=\ell_\mathcal{X}(\hat{u})\ell_\mathcal{X}(\hat{v});$

\item[6.]  for every $\omega\in \Omega$ the set $\{\ell_\mathcal{X}(\hat{u})(\omega): \hat{u}\in L^\infty(\Omega, \mathcal{X})\}$
is dense in $X(\omega).$
\end{enumerate}

Let $\mathcal{X}$ and $\mathcal{Y}$ be measurable bundles of
banach algebras over $\Omega.$ Assume that for each
$\omega\in\Omega$ the mapping $H_{\omega}:X(\omega)\rightarrow
Y(\omega)$ is an injective homomorphism of banach algebras. A
mapping $H:\omega\rightarrow H_{\omega}$ is called
\textit{inclusion} of $\mathcal{X}$ into $\mathcal{Y}$ if one has
$$
\{H_{\omega}(u(\omega)): u\in M(\Omega,\mathcal{X})\}\subset
M(\Omega,\mathcal{Y}).
$$
If $\{H_{\omega}(u(\omega)): u\in
M(\Omega,\mathcal{X})\}=M(\Omega,\mathcal{Y})$ the inclusion $H$
is called \textit{isomorphism} from $\mathcal{X}$ onto
$\mathcal{Y}.$ In this case, the bundles $\mathcal{X}$ and
$\mathcal{Y}$ are called \textit{isomorphic}.

\begin{thm}\label{mbba}
For every Banach--Kantorovich algebra  $\mathcal{U}$ over $E$
there exists a unique (up to isomorphism) measurable bundle of
banach algebras $(\mathcal{X}, L)$ with a vector-valued lifting
$\ell_{\mathcal{X}}$ such that $\mathcal{U}$ is isometrically
isomorphic to $E(\Omega, \mathcal{X}),$ and one has
$$
\{\ell_{\mathcal{X}}(x)(\omega): x\in L^\infty(\Omega,
\mathcal{X})\}=X(\omega)
$$
for all $\omega\in\Omega.$      Moreover, if $\mathcal{U}$ is a
unital algebra, then      $X(\omega)$ is also a unital algebra for
all $\omega\in\Omega.$
\end{thm}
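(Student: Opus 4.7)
The plan is to start from the known representation of Banach--Kantorovich spaces as measurable bundles (Gutman \cite{Gu}, Kusraev \cite{K3}) applied to the underlying Banach--Kantorovich space structure of $\mathcal{U}$ over $E$. This produces a measurable bundle $(\mathcal{X},L)$ of Banach spaces, a vector-valued lifting $\ell_\mathcal{X}$ satisfying properties 1--4 and 6 of the lifting definition, and an isometric Banach--Kantorovich isomorphism $\mathcal{U}\cong E(\Omega,\mathcal{X})$. What remains is to endow each fiber $X(\omega)$ with a Banach algebra structure so that condition (4) of the bundle definition and property 5 of the vector-valued lifting hold, and to sharpen the density in property 6 to an equality.

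The central step is a fiberwise multiplication. Since $L^\infty\subset E$ is a subalgebra and the $E$-valued norm is submultiplicative, $L^\infty(\Omega,\mathcal{X})$ is a subalgebra of $\mathcal{U}$. For each $\omega\in\Omega$ I would define
\[
\ell_\mathcal{X}(\hat u)(\omega)\cdot\ell_\mathcal{X}(\hat v)(\omega):=\ell_\mathcal{X}(\hat u\hat v)(\omega),\qquad \hat u,\hat v\in L^\infty(\Omega,\mathcal{X}).
\]
Well-definedness, which is the main technical point, uses property 2 of the lifting together with submultiplicativity and the multiplicativity of $p$: if $\ell_\mathcal{X}(\hat u_1)(\omega)=\ell_\mathcal{X}(\hat u_2)(\omega)$, then $p(\|\hat u_1-\hat u_2\|)(\omega)=0$, and hence
\[
\|\ell_\mathcal{X}((\hat u_1-\hat u_2)\hat v)(\omega)\|=p(\|(\hat u_1-\hat u_2)\hat v\|)(\omega)\leq p(\|\hat u_1-\hat u_2\|)(\omega)\,p(\|\hat v\|)(\omega)=0,
\]
with a symmetric argument for the second factor. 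The fiberwise submultiplicativity $\|\xi\eta\|\leq\|\xi\|\|\eta\|$ follows from the same inequality. Taking $X(\omega)$ to be the quotient of $L^\infty(\Omega,\mathcal{X})$ by the null ideal $\{\hat u:p(\|\hat u\|)(\omega)=0\}$ yields the required equality $\{\ell_\mathcal{X}(x)(\omega):x\in L^\infty(\Omega,\mathcal{X})\}=X(\omega)$ directly; that this quotient is complete (and hence a genuine Banach algebra) is a standard consequence of the $(bo)$-completeness of $\mathcal{U}$ combined with the lifting properties, and this is what canonically identifies the Gutman fiber with the quotient.

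Once the fibers are Banach algebras, condition (4) of the bundle definition and property 5 of the lifting hold by construction, and the isometric isomorphism $\mathcal{U}\cong E(\Omega,\mathcal{X})$ coming from the Banach--Kantorovich representation is automatically multiplicative since both multiplications agree on simple sections. The unital case is handled by observing that $\ell_\mathcal{X}(e)(\omega)$ acts as a unit on the image of $\ell_\mathcal{X}$ at $\omega$ by property 5, and by the preceding equality this image is all of $X(\omega)$. Uniqueness reduces to uniqueness for the underlying Banach--Kantorovich representation: any other bundle of Banach algebras satisfying the conclusion gives a bundle of Banach spaces isomorphic to $\mathcal{X}$, and the fiberwise isomorphisms automatically intertwine the multiplications since both are induced from the multiplication on $\mathcal{U}$ through the lifting. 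The main obstacle throughout is the well-definedness of the fiberwise multiplication and the verification that the image of the lifting exhausts the fiber; both rest on the multiplicativity of the scalar lifting $p$ and the submultiplicativity of the $E$-valued norm.
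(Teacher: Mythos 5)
Your proposal is correct and follows essentially the same route as the paper: the paper likewise realizes the fiber $X(\omega)$ as the quotient of the bounded part $\mathcal{U}_b\cong L^\infty(\Omega,\mathcal{X})$ by the null ideal $I_\omega=\{u: p(\|u\|)(\omega)=0\}$, with the multiplication descending to the fiber because $p$ is multiplicative and the $E$-valued norm is submultiplicative, and with the lifting defined by $\ell_{\mathcal{X}}(\hat{u})(\omega)=i_\omega(\tau^{-1}(\hat{u}))$ so that surjectivity onto $X(\omega)$ holds by construction. The one step you defer as ``standard'' --- that the quotient is complete in the seminorm-induced norm $p(\|\cdot\|)(\omega)$, equivalently that this norm coincides with the quotient norm of the Banach algebra $(\mathcal{U}_b,\|\cdot\|_\infty)$ modulo the closed ideal $I_\omega$ --- is where the paper spends most of its technical effort, via a cutting argument with the characteristic function of the set $A_\varepsilon$ on which $p(\|u\|)$ does not exceed $\alpha_\omega(u)+\varepsilon$.
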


\begin{proof} Put
$$
\mathcal{U}_b=\{u\in\mathcal{U}: \|u\|\in L^{\infty}(\Omega)\}.
$$
It is clear that $\mathcal{U}_b$ is an $L^{\infty}(\Omega)$-module
and $(bo)$-complete in $\mathcal{U}.$

On the other hand, $\mathcal{U}_b$ is a Banach algebra
     with respect to the norm
$$
\|u\|_{\infty}=\|\|u\|\|_{L^{\infty}(\Omega)},\,u\in\mathcal{U}_b.
$$
Define a seminorm $\alpha_{\omega}$ on $\mathcal{U}_b$ by
$\alpha_{\omega}(u)=p(\|u\|)(\omega)$ for all $\omega\in\Omega,$
where $p$ is the lifting on $L^{\infty}(\Omega).$ Set
$$
I_{\omega}=\{u\in\mathcal{U}_b:\alpha_{\omega}(u)=0\}.
$$
  Let us consider  a sequence $(u_{n})_{n\in \mathbb{N}}$   in $ I_\omega$
such that  $\|u_{n}-u\|_{\infty}\rightarrow0$ for some $u\in
\mathcal{U}_b .$ Then
$$\alpha_{\omega}(u)\leq\alpha_{\omega}(u_{n}-u)+\alpha_{\omega}(u_n)\leq
\|u_{n}-u\|_{\infty}\rightarrow0.$$ Therefore, $u\in I_\omega.$ It
is clear that $\lambda_{1}u+\lambda_{2}v \in I_\omega$ for all
       $u, v\in I_\omega, \lambda_{1}, \lambda_{2}\in\mathbb{C}.$ For $u\in
       I_{\omega}, v\in \mathcal{U}_b$ one gets
$$
\alpha_{\omega}(u\cdot v)=p(\|u\cdot v\|)(\omega)\leq
       p(\|u\|)(\omega)p(\|v\|)(\omega)=\alpha_{\omega}(u)p(\|v\|)(\omega)=0,
       $$ i.e.
       $u\cdot v\in I_\omega.$ Hence,
     $I_{\omega}$ is a closed ideal in $(\mathcal{U}_b,\|\cdot\|_{\infty}).$

Let $X(\omega)=\mathcal{U}_b/I_{\omega}$ be a factor-algebra and
$i_{\omega}:\mathcal{U}_b\rightarrow X(\omega)$ be the natural
homomorphism from
     $\mathcal{U}_b$ onto $X(\omega).$ Then
$X(\omega)$ is a banach algebra with respect to the norm
      $$
      \|i_{\omega}(u)\|_{\omega}^{0}=\inf\{\|v\|_{\infty}:\ u-v\in I_{\omega}\},\,
      u\in\mathcal{U}_b, \omega\in\Omega.
      $$

Let $\|\cdot\|_{\omega}$ be the norm on $X(\omega)$ generated by
the seminorm $\alpha_{\omega},$ i.e.
$\|i_\omega(u)\|_{\omega}=\alpha_{\omega}(u),\,u\in\mathcal{U}_b$
for all $\omega\in\Omega.$

Let us show that
$$
\|i_{\omega}(u)\|_{\omega}=\|i_{\omega}(u)\|_{\omega}^{0},\,
u\in\mathcal{U}_b, \omega\in\Omega.
$$
Indeed, fix $\omega\in\Omega$ and  $u\in\mathcal{U}_b.$ If
$v\in\mathcal{U}_b$ and $u -v\in I_\omega,$ then
$$
\|i_{\omega}(u)\|_{\omega}=\alpha_{\omega}(u)\leq
\alpha_{\omega}(v)+\alpha_{\omega}(u-v)=\alpha_{\omega}(v)\leq\|v\|_{\infty}.
$$
Whence
$\|i_{\omega}(u)\|_{\omega}\leq\|i_{\omega}(u)\|_{\omega}^{0}.$

To show the converse inequality we take an arbitrary
$\varepsilon>0.$ Set
$$
A_{\varepsilon}=\{\omega'\in\Omega:p(\|u\|)(\omega')\leq\alpha_{\omega}(u)+\varepsilon\}
$$
and
$$
\pi_{\varepsilon}=\chi_{A_{\varepsilon}}, \ \
u_{\varepsilon}=\pi_{\varepsilon}u.
$$
Then
\begin{equation}\label{11}
\pi_{\varepsilon}\|u\|\leq(\alpha_{\omega}(u)+\varepsilon)\pi_{\varepsilon}
\end{equation}
and
$$
\pi_{\varepsilon}^{\perp}\|u\|\geq(\alpha_{\omega}(u)+
\varepsilon)\pi_{\varepsilon}^{\perp}.
$$
The last inequality implies that
$$
p(\pi_{\varepsilon}^{\perp}\|u\|)(\omega)\geq(\alpha_{\omega}(u)+\varepsilon)p(
\pi_{\varepsilon}^{\perp})(\omega),
$$
i.e.
$$
p(\pi_{\varepsilon}^{\perp})(\omega)(p(\|u\|)(\omega)-\alpha_{\omega}(u))\geq\varepsilon
p( \pi_{\varepsilon}^{\perp})(\omega).
$$ Hence, $0\geq \varepsilon
p(\pi_{\varepsilon}^{\perp})(\omega)$ or
$p(\pi_{\varepsilon})(\omega)=1.$ Therefore,
\begin{eqnarray*}
\alpha_{\omega}(u-u_{\varepsilon})&=&\alpha_{\omega}(\pi_{\varepsilon}^{\perp}u)=
p(\|\pi_{\varepsilon}^{\perp}u\|)=\\
&=& p(\pi_{\varepsilon}^{\perp})(\omega)p(\|u\|)(\omega)=
\alpha_{\omega}(u)=0.
\end{eqnarray*}
Consequently, $u-u_{\varepsilon}\in I_{\omega}.$ It follows from
\eqref{11} that
$$
\|u_{\varepsilon}\|=\|\pi_{\varepsilon}u\| =
\pi_{\varepsilon}\|u\|\leq(\alpha_{\omega}(u)+\varepsilon)\pi_{\varepsilon}.
$$
This means that
$\|u_{\varepsilon}\|_{\infty}\leq\alpha_{\omega}(u)+\varepsilon.$
Since  $\varepsilon>0$ be an arbitrary we get
$$
\|i_{\omega}(u)\|_{\omega}^{0}\leq\|i_{\omega}(u)\|_{\omega}+\varepsilon,
$$
i.e.
$\|i_{\omega}(u)\|_{\omega}^{0}\leq\|i_{\omega}(u)\|_{\omega}.$

Now let us define a mapping $\mathcal{X}$ which assigns for each
$\omega\in\Omega$ the  banach algebra $X(\omega).$ By $L$ we
denote the set of all sections of the form
$\omega\in\Omega:\omega\rightarrow i_{\omega}(u),$ where
$u\in\mathcal{U}_b.$ One can see that $(\mathcal{X},L)$ is a
measurable bundle of banach algebras.

Let us consider  $E(\Omega, \mathcal{X})$ with $E$-valued norm
$\|\cdot\|_{E(\Omega, \mathcal{X})}.$ Let us show that
$\mathcal{U}$ is isometrically isomorphic to $E(\Omega,
\mathcal{X}).$

For each $u\in\mathcal{U}_b$ define
$\tau(u)=\widehat{i_{\omega}(u)}.$ Then for $u\in\mathcal{U}_b$
and
 $\omega\in\Omega$ one has
$$
\|i_{\omega}(u)\|_{\omega}=\alpha_{\omega}(u)=p(\|u\|)(\omega).
$$
Hence, $\|\widehat{i_{\omega}(u)}\|_{E(\Omega,
\mathcal{X})}=\|u\|,$ and therefore, $\tau$ is an isometry from
$\mathcal{U}_b$ into $L^\infty(\Omega, \mathcal{X}).$ Since
$\tau(\mathcal{U}_b)$ contains the set of all simple sections then
$\tau$ is isometry from $\mathcal{U}_b$ onto $L^\infty(\Omega,
\mathcal{X}).$ Moreover, we have $$\tau(u\cdot
v)=\widehat{i_{\omega}(u\cdot v)}=\widehat{i_{\omega}(u)
i_{\omega}(v)}=
\widehat{i_{\omega}(u)}\widehat{i_{\omega}(v)}=\tau(u)\tau(v).$$
So, $\tau$ is an isometrically isomorphism from $\mathcal{U}_b$
onto $L^\infty(\Omega, \mathcal{X}).$ Since  $\mathcal{U}_b$ is
$(bo)$-complete in $\mathcal{U}$ we obtain that  $\tau$ can be
extended up to isometrically isomorphism from $\mathcal{U}$ onto
$E(\Omega, \mathcal{X}).$ Besides, it is clear that $\tau$
preserves the multiplication, i.e. $\tau$ is an isomorphism of
algebras $\mathcal{U}$ and $E(\Omega, \mathcal{X}).$

Now let us establish that $(\mathcal{X},L)$ is a measurable bundle
with a vector-valued lifting. Define a mapping $
\ell_{\mathcal{X}}:L^\infty(\Omega, \mathcal{X})\rightarrow
\mathcal{L}^{\infty}(\Omega,\mathcal{X}) $ by
$$
\ell_{\mathcal{X}}(\hat{u})(\omega)=i_{\omega}(\tau^{-1}(\hat{u})),\,\hat{u}\in
L^\infty(\Omega, \mathcal{X}).
$$
Since $i_{\omega}(\tau^{-1}(\hat{u}))$ is defined for all
$\omega\in\Omega,$ then $\mbox{dom}\,(l_{X}(\hat{u}))=\Omega.$ For
$\hat{u}\in L^\infty(\Omega, \mathcal{X})$ and $\omega\in\Omega$
one has
$$
p(\|\hat{u}\|)(\omega)=p(\|\tau^{-1}(\hat{u})\|)(\omega)
=\alpha_{\omega}(\tau^{-1}(\hat{u}))=
\|i_{\omega}(\tau^{-1}(\hat{u}))\|_{\omega}=\|\ell_{\mathcal{X}}(\hat{u})(\omega)\|_{\omega}.
$$

The linearity of $\ell_{\mathcal{X}}$ is evident. For $\hat{u},
\hat{v}\in L^\infty(\Omega, \mathcal{X})$ we obtain
$$
\ell_{\mathcal{X}}(\hat{u}\cdot\hat{v})(\omega)=i_{\omega}(\tau^{-1}(\hat{u}\hat{v}))=
i_{\omega}(\tau^{-1}(\hat{u}))i_{\omega}(\tau^{-1}(\hat{v}))=
\ell_{\mathcal{X}}(\hat{u})(\omega)\cdot
\ell_{\mathcal{X}}(\hat{v})(\omega).
$$
 According to the
construction one gets $
\{\ell_{\mathcal{X}}(\hat{u})(\omega):\hat{u}\in L^\infty(\Omega,
\mathcal{X})\}=X(\omega). $

Now let us prove the uniqueness of $\mathcal{X}.$ Assume that
$\mathcal{Y}$ is a measurable bundle of Banach algebras with a
vector-valued lifting $\ell_{\mathcal{Y}}$ such that
$E(\Omega,\mathcal{Y})$ is isometrically isomorphic to
$\mathcal{U}.$

Let $i$ be an isometrically isomorphism between $L^\infty(\Omega,
\mathcal{X})$ and $L^{\infty}(\Omega,\mathcal{Y}).$ Define a
linear operator
 $$
 H_{\omega}:X(\omega)\rightarrow Y(\omega)\, (\omega\in\Omega)
 $$
  by
$$
H_{\omega}(\ell_{\mathcal{X}}(\hat{u})(\omega))=\ell_{\mathcal{Y}}(i(\hat{u}))
(\omega),\,\hat{u}\in L^\infty(\Omega, \mathcal{X}).
$$
Then for $\hat{u}\in L^\infty(\Omega, \mathcal{X})$ we have
\begin{eqnarray*}
\|H_{\omega}(\ell_{\mathcal{X}}(\hat{u})(\omega))\|_{Y(\omega)}&=&
\|\ell_{\mathcal{Y}}(i(\hat{u}))(\omega)\|_{Y(\omega)}=p(\|i(\hat{u})\|)(\omega)\\
&=&p(\|\hat{u}\|)(\omega)=\|\ell_{\mathcal{X}}(\hat{u})(\omega)\|_{X(\omega)},
\end{eqnarray*}
i.e. $H_{\omega}$ is an isometry. By the same argument with
properties of vector-valued lifting one yields that $H_{\omega}$
is a homomorphism and $\{H_{\omega}(u(\omega)): u\in
M(\Omega,\mathcal{X})\}=M(\Omega,\mathcal{Y}).$ Hence,
$\mathcal{X}$ and $\mathcal{Y}$ are isometrically isomorphic.

Now assume that $e$ is a unit in $\mathcal{U},$ then
$e\in\mathcal{U}_b.$ Since $i_{\omega}:\mathcal{U}_b\rightarrow
X(\omega)$ is a homomorphism, then $e_{\omega}=i_{\omega}(e)$ is a
unit in $X(\omega)$ for all $\omega\in\Omega.$ The proof is
complete.
\end{proof}

 An operator
 $
 \Phi:\mathcal{U}\rightarrow \mathcal{U}
 $
  is called \textit{mixing preserving}
if one has
$$
\Phi\left(\sum\limits_{n=1}^{\infty}\pi_n
x_n\right)=\sum\limits_{n=1}^{\infty}\pi_n \Phi(x_n)
$$
for any sequence $(x_n)$ in $\mathcal{U}$ and partition of unity
$(\pi_n)$ in $\nabla.$

As usual, by $\mbox{Inv}(\mathcal{U})$ we denote the set of all
invertible elements of the algebra $\mathcal{U}.$ For $a, b\in E$
$a\gg b$ means that $a(\omega)>b(\omega)$ for almost all
$\omega\in \Omega.$

\begin{prop}\label{3}
Let $\mathcal{U}$ be a unital Banach--Kantorovich algebra over
$E.$ Then the following statements hold true:
\begin{enumerate}
\item[(i)] if $x\in\mathcal{U},$ $\|x\|\ll \textbf{1}$
 then the element $e-x$ is invertible
and
$$
\|(e-x)^{-1}-e\|\leq\|x\|(\textbf{1}-\|x\|)^{-1};
$$
\item[(ii)]  if $x\in \texttt{Inv}(\mathcal{U}),h\in\mathcal{U}$ and
$2\|h\|\ll\|x^{-1}\|^{-1}$  then $x+h\in \mbox{Inv}(\mathcal{U})$
and
\begin{equation}\label{12}
\|(x+h)^{-1}-x^{-1}\|\leq 2\|x^{-1}\|^{2}\|h\|;
\end{equation}
\item[(iii)] the mapping $x \in\mbox{Inv}(\mathcal{U})\rightarrow x^{-1}
    $ is continuous and mixing preserving.
\end{enumerate}
\end{prop}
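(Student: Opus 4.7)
The plan is to adapt the classical Banach algebra proofs (Neumann series and perturbation of invertibles) to the Banach--Kantorovich setting, using $(bo)$-completeness of $\mathcal{U}$ and the fact that order convergence in $L^0(\Omega)$ coincides with almost-everywhere convergence. The overall strategy is: prove (i) via the Neumann series, derive (ii) by factoring $x+h = x(e + x^{-1}h)$ and reducing to (i), and then deduce both parts of (iii) from (ii) together with a direct verification using the partition of unity.

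For (i), I would construct the inverse of $e - x$ as the Neumann series $y = \sum_{n=0}^\infty x^n$. Submultiplicativity of $\|\cdot\|$ gives $\|x^n\| \leq \|x\|^n$, and since $\|x\| \ll \mathbf{1}$ pointwise a.e., the scalar series $\sum_n \|x\|^n$ converges a.e.\ to $(\mathbf{1}-\|x\|)^{-1}$. Hence the partial sums $y_N = \sum_{n=0}^N x^n$ form a $(bo)$-Cauchy net (the tail bound $\sum_{n=M+1}^N \|x\|^n$ is $(o)$-null), and by $(bo)$-completeness of $\mathcal{U}$ they converge to some $y \in \mathcal{U}$. Passing to the $(bo)$-limit in $(e-x)y_N = e - x^{N+1}$, where $\|x^{N+1}\| \leq \|x\|^{N+1} \to 0$ in $(o)$-sense, yields $(e-x)y = y(e-x) = e$. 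The triangle inequality then gives $\|y - e\| \leq \sum_{n=1}^\infty \|x\|^n = \|x\|(\mathbf{1}-\|x\|)^{-1}$.

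For (ii), factor $x+h = x(e + x^{-1}h)$ and note that the hypothesis $2\|h\| \ll \|x^{-1}\|^{-1}$ together with submultiplicativity yields $\|x^{-1}h\| \leq \|x^{-1}\|\|h\| \ll \mathbf{1}/2 \ll \mathbf{1}$. Applying (i) to $-x^{-1}h$ shows that $e + x^{-1}h$ (and thus $x + h$) is invertible with $(x+h)^{-1} = (e+x^{-1}h)^{-1}x^{-1}$. Then $(x+h)^{-1} - x^{-1} = \bigl[(e+x^{-1}h)^{-1} - e\bigr]x^{-1}$, and combining the estimate from (i) with $(\mathbf{1} - \|x^{-1}h\|)^{-1} \leq 2\cdot\mathbf{1}$ (which follows from $\|x^{-1}h\| \ll \mathbf{1}/2$) gives precisely \eqref{12}.

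For (iii), continuity of $x \mapsto x^{-1}$ at any $x \in \mbox{Inv}(\mathcal{U})$ follows directly from (ii): once $x_\alpha \to x$ in $(bo)$, the hypothesis $2\|x_\alpha - x\| \ll \|x^{-1}\|^{-1}$ is met on a suitable increasing sequence of projections (localizing on $\{\|x^{-1}\| \leq n\} \uparrow \mathbf{1}$), and the bound $\|x_\alpha^{-1} - x^{-1}\| \leq 2\|x^{-1}\|^2\|x_\alpha - x\|$ transports $(bo)$-smallness from $x_\alpha - x$ to $x_\alpha^{-1} - x^{-1}$. For mixing preservation, given a partition of unity $(\pi_n)$ and invertible $x_n$ with $x = \sum_n \pi_n x_n$, I would take $y = \sum_n \pi_n x_n^{-1}$ (a well-defined element by $d$-decomposability and $(bo)$-completeness) and verify $xy = yx = e$ by direct expansion, exploiting $\pi_n \pi_m = \delta_{nm}\pi_n$ and $\sum_n \pi_n = \mathbf{1}$; uniqueness of the inverse then forces $\bigl(\sum_n \pi_n x_n\bigr)^{-1} = \sum_n \pi_n x_n^{-1}$. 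The main technical obstacle is precisely this localization step in continuity: translating $(o)$-smallness of $\|x_\alpha - x\|$ (which is only pointwise a.e.\ smallness) into the strict pointwise inequality demanded by (ii) requires the projection-truncation argument rather than a single uniform estimate, since there is no global Archimedean bound on $\|x^{-1}\|$ in $E$.
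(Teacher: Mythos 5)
Your proposal is correct and follows essentially the same route as the paper: Neumann series with the submultiplicative bound $\sum\|x\|^n=(\mathbf{1}-\|x\|)^{-1}$ and $(bo)$-completeness for (i), the factorization $x+h=x(e+x^{-1}h)$ reduced to (i) for (ii), and continuity from \eqref{12} plus the direct computation $\bigl(\sum_n\pi_n x_n\bigr)\bigl(\sum_n\pi_n x_n^{-1}\bigr)=e$ for (iii). Your treatment is in fact somewhat more detailed than the paper's at two points where it is terse, namely the derivation of the constant $2$ in \eqref{12} from $(\mathbf{1}-\|x^{-1}h\|)^{-1}\leq 2\cdot\mathbf{1}$ and the projection-localization needed to invoke (ii) in the continuity argument.
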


\begin{proof} (i) By the inequality
$$
\sum\limits_{n=0}^{\infty}\|x^{n}\|\leq
\sum\limits_{n=0}^{\infty}\|x\|^{n}=(\textbf{1}-\|x\|)^{-1}
$$
 it follows that  the series $\sum\limits_{n=0}^{\infty}x^{n}$ $(bo)$-converges to some
    $y\in\mathcal{U}.$
The sequence
$$
(e-x)\sum\limits_{n=0}^{k}x^{n}=e-x^{k+1}
$$
simultaneously $(bo)$-converges to $(e-x)y=y(e-x)$  and $e$,
therefore, the element $y$ is inverse of $e-x.$ Furthermore, we
have
$$
\|(e-x)^{-1}-e\|=\left\|\sum\limits_{n=1}^{\infty}x^{n}\right
\|\leq\|x\|(\textbf{1}-\|x\|)^{-1}.
$$

(ii) Taking into account that  $x+h=x(e+x^{-1}h),$
$\|x^{-1}h\|\leq\|x^{-1}\|\|h\|\ll\textbf{1}$ and property (i) one
finds $x+h\in \texttt{Inv}(\mathcal{U})$ and
$\|(x+h)^{-1}-x^{-1}\|\leq 2\|x^{-1}\|^{2}\|h\|.$

 (iii) Inequality \eqref{12} implies that the mapping
$$
x\in \texttt{Inv}(\mathcal{U})\rightarrow x^{-1}\in
\texttt{Inv}(\mathcal{U})
$$
is continuous.

Let $(x_{n})_{n\in\mathbb{N}}\subset \texttt{Inv}(\mathcal{U})$
and let $(\pi_{n})_{n\in\mathbb{N}}$ be a partition of the unity
in $\nabla.$ Set
$$
x=\sum\limits_{n=1}^{\infty}\pi_{n}x_{n}.
$$
It is clear that
$$
x\left(\sum\limits_{n=1}^{\infty}\pi_{n}
x_{n}^{-1}\right)=\left(\sum\limits_{n=1}^{\infty}\pi_{n}
x_{n}^{-1}\right)x=e.
$$
Therefore  $x^{-1}=\sum\limits_{n=1}^{\infty}\pi_{n} x_{n}^{-1}.$
This yields that the mapping
 $x\in \texttt{Inv}(\mathcal{U})\rightarrow x^{-1}$  is mixing
preserving. \end{proof}

For every $x\in\mathcal{U}$ by $\mbox{sp}(x)$  we denote the set
of all $a\in E$ for which the element $a e-x$ is not invertible.

\begin{prop}\label{4}
For every $x\in\mathcal{U}\equiv E(\Omega, \mathcal{X})$ the set
$\mbox{sp}(x)$ is non-empty.
\end{prop}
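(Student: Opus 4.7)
The plan is to represent $\mathcal{U}$ as $E(\Omega,\mathcal{X})$ via Theorem \ref{mbba}, and then to obtain an element of $\mbox{sp}(x)$ by making a measurable selection of a point of the classical scalar spectrum of $x(\omega)\in X(\omega)$ on each fiber. The strategy is essentially to transfer the classical non-emptiness of the spectrum in a unital complex Banach algebra to the fibers and then patch the resulting pointwise data back into a single element of $E$.

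First, I would use Theorem \ref{mbba} to realise $\mathcal{U}$ as $E(\Omega,\mathcal{X})$ with vector-valued lifting $\ell_\mathcal{X}$, so that each fiber $X(\omega)$ is a unital complex Banach algebra with unit $e(\omega)=\ell_\mathcal{X}(e)(\omega)$. For $x\in E(\Omega,\mathcal{X})$, represented by a measurable section $u$, define the classical fiberwise spectrum
$$
\sigma(\omega)=\{\lambda\in\mathbb{C}\colon \lambda e(\omega)-u(\omega)\text{ is not invertible in }X(\omega)\}.
$$
By the classical non-emptiness of the spectrum in a unital complex Banach algebra, $\sigma(\omega)$ is a non-empty compact subset of $\{|\lambda|\leq\|u(\omega)\|_{X(\omega)}\}$ for almost every $\omega$.

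Next, I would apply a measurable selection theorem of Kuratowski--Ryll-Nardzewski type to the multifunction $\omega\mapsto\sigma(\omega)$ to produce a measurable function $a\colon\Omega\to\mathbb{C}$ with $a(\omega)\in\sigma(\omega)$ almost everywhere. Since $|a(\omega)|\leq\|u(\omega)\|_{X(\omega)}=\|x\|(\omega)$ and $\|x\|\in E$, the solidity of $E$ forces $a\in E$. If $ae-x$ were invertible in $\mathcal{U}$ with inverse $y\in\mathcal{U}$, then evaluating the identity $(ae-x)y=e$ fiberwise using property (5) of the vector-valued lifting would give $(a(\omega)e(\omega)-u(\omega))y(\omega)=e(\omega)$ a.e., contradicting $a(\omega)\in\sigma(\omega)$. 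Hence $a\in\mbox{sp}(x)$, so $\mbox{sp}(x)\neq\emptyset$.

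The main obstacle I anticipate is justifying the measurability of the multifunction $\omega\mapsto\sigma(\omega)$. I would handle this by noting that for each fixed $\lambda\in\mathbb{C}$ the element $\lambda e-x$ belongs to $\mathcal{U}$, and by Proposition \ref{3} the fiberwise function
$$
\omega\longmapsto\bigl\|(\lambda e(\omega)-u(\omega))^{-1}\bigr\|_{X(\omega)}^{-1},
$$
set equal to $0$ when $\lambda e(\omega)-u(\omega)$ is not invertible, is measurable in $\omega$ and (by the continuity statement in Proposition \ref{3}) continuous in $\lambda$ on the resolvent set. Its zero-set in $\lambda$ is exactly $\sigma(\omega)$, so the graph $\{(\omega,\lambda):\lambda\in\sigma(\omega)\}$ is jointly measurable; the standard grid-based Kuratowski--Ryll-Nardzewski argument then produces the required measurable selector.
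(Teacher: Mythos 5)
Your overall strategy (fiberwise classical spectrum plus a measurable selection) is genuinely different from the paper's argument, which proceeds by contradiction: assuming $\mbox{sp}(x)=\emptyset$, the paper forms $c=\sup\{\|(ae-x)^{-1}\|:a\in\textbf{D}\}$ in $E$ using Proposition~\ref{3}(iii), cuts by a projection $\pi$ with $\pi c\in L^{\infty}(\Omega)$ so that the resolvents land in $L^{\infty}(\Omega,\mathcal{X})$, applies the lifting to $\pi(ae-x)^{-1}(ae-x)=\pi e$, and uses that $\{p(a)(\omega):a\in\textbf{D}\}$ is the whole closed unit disc to contradict the classical non-emptiness of $\mbox{sp}(\ell_{\mathcal{X}}(x)(\omega))$. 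The paper thus only ever needs the \emph{easy} implication (invertibility in the module implies fiberwise invertibility), whereas your route needs quantitative fiberwise information for \emph{every} $\lambda$, including those where $\lambda e-x$ is not invertible in $\mathcal{U}$.

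That is where your proof has a genuine gap: the measurability of $\omega\mapsto\sigma(\omega)$ is not established by the argument you give. You invoke Proposition~\ref{3} to claim that, for fixed $\lambda$, the function $\omega\mapsto\|(\lambda e(\omega)-u(\omega))^{-1}\|_{X(\omega)}^{-1}$ (set to $0$ off the fiberwise resolvent set) is measurable. But Proposition~\ref{3} is a statement about continuity and mixing-preservation of inversion \emph{inside the algebra} $\mathcal{U}$; it says nothing about the fibers. If $\lambda e-x$ is not invertible in $\mathcal{U}$, there is no section representing the fiberwise inverses, so neither the set $\{\omega:\lambda e(\omega)-u(\omega)\in\mbox{Inv}(X(\omega))\}$ nor the norm of the fiberwise inverse on that set is known to be measurable. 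The fibers $X(\omega)=\mathcal{U}_b/I_\omega$ need not be separable, so one also cannot test invertibility against a countable dense family of sections (the usual device for proving such measurability), and the Kuratowski--Ryll-Nardzewski theorem cannot be applied without this weak measurability of the multifunction. Establishing that measurability is essentially the whole difficulty of the proposition; as written, your proof assumes it. The final step of your argument (passing from invertibility of $ae-x$ in $\mathcal{U}$ to a.e.\ fiberwise invertibility) is fine, though it is cleaner to evaluate representatives pointwise, or to first cut by a projection so that both $ae-x$ and its inverse lie in $L^{\infty}(\Omega,\mathcal{X})$ before applying property~5 of $\ell_{\mathcal{X}}$, since the vector-valued lifting is only defined on $L^{\infty}(\Omega,\mathcal{X})$.
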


\begin{proof}
Without loss of generality we can assume that $\|x\|\leq \textbf{1},$ because
$$
\mbox{sp}(x)=\{(\textbf{1}+\|x\|)a: a\in
\mbox{sp}((\textbf{1}+\|x\|)^{-1}x)\}.
$$
Then
$$
\|\ell_{\mathcal{X}}(x)(\omega)\|_{X(\omega)}\leq 1
$$
for all $\omega\in\Omega.$ Hence, for each $\omega\in\Omega$ there
exists    $\lambda_{\omega}\in\mathbb{C},
|\lambda_{\omega}|\leq 1$ such that
\begin{equation}\label{13}
\lambda_{\omega}\in \mbox{sp}(\ell_{\mathcal{X}}(x)(\omega)).
\end{equation}

Now suppose that  $\mbox{sp}(x)=\emptyset.$ Denote
   $$
   \textbf{D}=\{a\in E: |a|\leq \textbf{1}\}.
   $$
    Since the mapping
   $a\in \textbf{D}\rightarrow(a e-x)^{-1}$
   is continuous and mixing preserving, then there exists
    $$
    \sup\{\|(a e-x)^{-1}\|:a\in\textbf{D}\}=c\in E.
    $$

Now take a nonzero $\pi\in\nabla$ with $\pi c\in
L^{\infty}(\Omega).$ Then the set
$$
\Omega_0=\{\omega\in\Omega: p(\pi)(\omega)=1\}
$$ has a positive measure. Fix $\omega\in\Omega_0 .$ By
definition we have
$$
\pi\|(a e-x)^{-1}\|\leq \pi c$$ for all $a\in\textbf{D}.$
Therefore, $\pi(a e-x)^{-1}\in L^\infty(\Omega, \mathcal{X})$ for
every $a\in\textbf{D}.$ Now applying the lifting $\ell_{X}$
    to the equality $\pi(a e-x)^{-1}(a e-x)=\pi e$
     one finds
$$
\ell_{\mathcal{X}}(\pi(a e-x)^{-1})(\omega)(p(a)
    (\omega)\ell_{\mathcal{X}}(e)(\omega)-\ell_{\mathcal{X}}(x)(\omega))=
    \ell_{\mathcal{X}}(e)(\omega).
$$
This implies that the element
$p(a)(\omega)e_{\omega}-\ell_{\mathcal{X}}(x)\omega)$ is
invertible in $X(\omega)$ for all $a\in\textbf{D}.$ Due to
properties of  lifting $p$ we obtain
$$\{p(a)(\omega):a\in\textbf{D}\}=
\{\lambda_{\omega}\in\mathbb{C}:|\lambda_{\omega}|\leq1\}.
$$
So,  every $\lambda_{\omega}\in\mathbb{C}$ with
$|\lambda_{\omega}|\leq1$ does not belong to
$\mbox{sp}(\ell_{\mathcal{X}}(x)(\omega))$ for all
$\omega\in\Omega_0.$ This contradicts to \eqref{13}, which yields
the desired assertion. The proof is complete.
\end{proof}

By $\nabla$ we denote the Boolean algebra of all idempotents of
$E.$ Let $\mathcal{U}$ be a unital Banach--Kantorovich algebra
over $E,$ then the subalgebra
$$
\pi\mathcal{U}=\{\pi x: x\in\mathcal{U}\}, \pi\in\nabla,\pi\neq0
$$
is considered as unital with unit $\pi e.$

By $\mbox{spm}(x)$ we denote the set of all $a\in E$ such that for
each $\pi\in\nabla, \pi\neq0$ one has $\pi(a e-x)\notin
\mbox{Inv}(\pi\mathcal{U}).$ It is clear that $\mbox{sp}(x)\subset
\mbox{spm}(x).$

Next result is a variant of the theorem about spectrum for
elements of Banach--Kantorovich algebra over $E.$

\begin{thm}\label{5}
For every $x\in\mathcal{U}$ the set $\mbox{spm}(x)$ is nonempty,
$(o)$-closed, cyclic and bounded subset of $E.$
\end{thm}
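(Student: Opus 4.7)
The plan is to reduce the four properties of $\mathrm{spm}(x)$ to pointwise facts about the scalar spectra $\mathrm{sp}(x(\omega))$ in the fibres of the bundle from Theorem~\ref{mbba}. Identifying $\mathcal{U}$ with $E(\Omega,\mathcal{X})$ and writing $x(\omega)=\ell_{\mathcal{X}}(x)(\omega)\in X(\omega)$ and $a(\omega)=p(a)(\omega)\in\mathbb{C}$ for $a\in E$, the central claim I would establish is the characterization
\begin{equation*}
a\in\mathrm{spm}(x)\iff a(\omega)\in\mathrm{sp}(x(\omega))\text{ for a.e. }\omega\in\Omega.
\end{equation*}

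The easy direction ``$\Leftarrow$'' goes by contrapositive: if some nonzero $\pi$ makes $\pi(ae-x)$ invertible with inverse $z\in\pi\mathcal{U}$, then $(a(\omega)e_\omega-x(\omega))z(\omega)=e_\omega$ a.e.\ on $\mathrm{supp}(\pi)$, contradicting $a(\omega)\in\mathrm{sp}(x(\omega))$ a.e. The harder direction ``$\Rightarrow$'' is the main obstacle: assuming $B=\{\omega:a(\omega)e_\omega-x(\omega)\in\mathrm{Inv}(X(\omega))\}$ has positive measure, I would define $b(\omega)=(a(\omega)e_\omega-x(\omega))^{-1}$ on $B$ and zero outside, check that $b$ is a measurable section (using continuity of inversion on the open set of invertibles in each fibre, together with measurability of $\omega\mapsto a(\omega)e_\omega-x(\omega)$ supplied by the vector-valued lifting), restrict to $B_n=\{\omega\in B:\|b(\omega)\|\leq n\}$ with $n$ large enough that $B_n$ has positive measure, and verify that with $\pi=\chi_{B_n}$ the section $\pi b$ lies in $L^{\infty}(\Omega,\mathcal{X})$ and inverts $\pi(ae-x)$ in $\pi\mathcal{U}$, contradicting $a\in\mathrm{spm}(x)$.

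With the characterization in hand, each of the four assertions follows from the corresponding classical fact about scalar spectra. \emph{Nonemptiness}: a measurable selection theorem applied to the multifunction $\omega\mapsto\mathrm{sp}(x(\omega))$, whose values are nonempty compact subsets of $\{\lambda\in\mathbb{C}:|\lambda|\leq\|x(\omega)\|\}$, yields a measurable $a$ with $a(\omega)\in\mathrm{sp}(x(\omega))$ a.e. \emph{Boundedness}: the pointwise spectral-radius bound $|a(\omega)|\leq\|x(\omega)\|$ gives $|a|\leq\|x\|$ in $E$. \emph{$(o)$-closedness}: $a_\alpha\xrightarrow{(o)}a$ in $E$ implies $a_\alpha(\omega)\to a(\omega)$ a.e., and closedness of $\mathrm{sp}(x(\omega))\subset\mathbb{C}$ passes to the limit. \emph{Cyclicness}: for $a=\sum\pi_na_n$ with $a_n\in\mathrm{spm}(x)$ one has $a(\omega)=a_n(\omega)\in\mathrm{sp}(x(\omega))$ on $\mathrm{supp}(\pi_n)$ a.e., patching to a.e.\ on $\Omega$.

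For three of the four assertions a more elementary route avoiding measurable selection is available using only Proposition~\ref{3}: boundedness via the factorization $\pi(ae-x)=\pi a\bigl(\pi e-(\pi a)^{-1}\pi x\bigr)$ combined with Proposition~\ref{3}(i); cyclicness via $\pi_na=\pi_na_n$ localized to $\pi\pi_n$; and $(o)$-closedness via a two-step exhaustion that first bounds $\|z\|$ by a constant $c$ on some nonzero $\pi_0\leq\pi$ and then arranges $\pi_1y_{\alpha_0}\leq(4c)^{-1}\pi_1$ on some nonzero $\pi_1\leq\pi_0$, so that Proposition~\ref{3}(ii) inverts the perturbed element $\pi_1(a_{\alpha_0}e-x)$. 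Nonemptiness, however, seems to genuinely require the bundle representation.
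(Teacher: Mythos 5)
The weak point is the hard direction of your central characterization, $a\in\mathrm{spm}(x)\Rightarrow a(\omega)\in\mathrm{sp}(x(\omega))$ a.e., together with the measurable selection you invoke for nonemptiness; both rest on measurability claims that are the actual crux of the matter and that ``continuity of inversion'' does not supply. To run your contradiction argument you need the set $B=\{\omega: a(\omega)e_\omega-x(\omega)\in\mathrm{Inv}(X(\omega))\}$ to be measurable and the pointwise inverse $b$ to be a measurable section. Invertibility of $u(\omega)$ in $X(\omega)$ is witnessed by the existence of some $v$ in the fibre with $\|u(\omega)v-e_\omega\|<1$ and $\|vu(\omega)-e_\omega\|<1$, so $B$ is naturally an \emph{uncountable} union $\bigcup_{\hat w}B_{\hat w}$ of measurable sets indexed by $\hat w\in L^\infty(\Omega,\mathcal X)$; since nothing in the paper makes the bundle separable (no countable dense family of sections is assumed), neither the measurability of $B$ nor the existence of a single $\hat w$ whose $B_{\hat w}$ has positive measure follows. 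The same obstacle blocks the Kuratowski--Ryll-Nardzewski selection for $\omega\mapsto\mathrm{sp}(x(\omega))$: you would need $\{\omega:\mathrm{sp}(x(\omega))\cap U\neq\emptyset\}\in\Sigma$ for open $U$, which again reduces to measurability of fibrewise invertibility. Since nonemptiness is the one assertion for which you offer no fallback, the proposal as it stands does not prove the theorem.

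The paper sidesteps all of this. For nonemptiness it argues inside the Boolean algebra $\nabla$: for each $a\in E$ it takes $\pi_a=\bigvee\{\pi:\pi(ae-x)\in\mathrm{Inv}(\pi\mathcal U)\}$, shows $\bigwedge_{a\in E}\pi_a=0$ by invoking Proposition~\ref{4} on $\pi_0\mathcal U$ (and Proposition~\ref{4} uses only the \emph{easy} direction of your dictionary: a global inverse pushes down to fibrewise inverses through the lifting), extracts a countable subfamily with $\bigwedge_n\pi_{a_n}=0$ using that the finite measure algebra satisfies the countable chain condition, and then mixes, $a=\sum_n q_na_n$, along the disjointification of the $\pi_{a_n}^{\perp}$. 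No measurable selection and no inverse of a section is ever needed. Your fallback arguments for boundedness, cyclicness and $(o)$-closedness via Proposition~\ref{3} are essentially the paper's own proofs and are fine; if you replace the selection argument for nonemptiness by this exhaustion-and-mixing scheme, the whole proof goes through without touching the measurability questions.
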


\begin{proof} First we shall show that $\mbox{spm}(x)$
is nonempty. Indeed, for  $a\in E$ we put
$$
\nabla_{a}=\{\pi\in\nabla:\pi\neq0, \pi(a
e-x)\in\mbox{Inv}(\mathcal{\pi U})\}.
$$
Let $\pi_{a}=\bigvee\nabla_{a}.$ It is clear that $\pi_{a}(a
e-x)\in \mbox{Inv}(\pi_{a}\mathcal{U}),$ and for every
$\pi\in\nabla,\pi\leq\pi_{a}^{\perp}$ one has
\begin{equation}\label{14}
\pi(a e-x)\notin  \mbox{Inv}(\pi\mathcal{U}).
\end{equation}
Denote $\pi_{0}=\bigwedge\{\pi_{a}:a\in E\}.$ Assume that
$\pi_{0}\neq0.$ Then
$$
\pi_{0}(a e-x)\in  \mbox{Inv}(\pi_{0}\mathcal{U})
$$ for all
$a\in E.$ But this contradicts to
$\mbox{spm}_{\pi_{0}\mathcal{U}}(\pi_{0}x)\neq\emptyset.$
Therefore $\pi_{0}=0.$ Now we can choose  a sequence
$(a_{n})_{n\in\mathbb{N}}\subset E$ such that
$\bigwedge\limits_{n=1}^{\infty}\pi_{a_{n}}=0.$ Let us define
$$
q_{1}=\pi^{\perp}_{a_{1}},\,
q_{n}=\pi^{\perp}_{a_{n}}\wedge q^{\perp}_{n-1},\, n>1
$$
and put $a=\sum\limits_{n=1}^{\infty} q_{n}a_{n}.$ Then
$(q_{n})_{n\in\mathbb{N}}$ is a partition of unity in $\nabla.$
Take any $\pi\in\nabla, \pi\neq0.$ Then $\pi q_{k}\neq0$ for some
$k\in\mathbb{N}.$ From the definition of $q_{k}$ one gets $\pi
q_{k}\leq\pi^{\perp}_{a_{k}}.$ So, from \eqref{14} it follows that
$$
\pi q_{k}(a_{k} e-x)\notin  \mbox{Inv}(\pi q_{k}\mathcal{U}).
$$
The equality $\pi q_{k}a=\pi q_{k} a_{k}$ implies that
$\pi(ae-x)\notin \mbox{Inv}(\pi\mathcal{U}).$ Hence, $a\in
\mbox{spm}(x).$

Now let us show $\mbox{spm}(x)$ is cyclic. Indeed, let
$(a_{n})_{n\in\mathbb{N}}\subset \mbox{spm}(x),$ and
$(\pi_{n})_{n\in\mathbb{N}}$ be a partition of unity in $\nabla.$
Denote $a=\sum\limits_{n=1}^{\infty} \pi_{n}a_{n}.$ Take any
$\pi\in\nabla, \pi\neq0.$ Then $\pi\pi_{k}\neq0$ for some
$k\in\mathbb{N}.$ According to definition of $\pi_{k}$ we get
$$
\pi\pi_{k}(a_{k} e-x)\notin \mbox{Inv}(\pi\pi_{k}\mathcal{U}).
$$
Since $\pi\pi_{k}a=\pi\pi_{k}a_{k},$ one finds $\pi(a e-x)\notin
\mbox{Inv}(\pi\mathcal{U}).$ Hence, $a\in \mbox{spm}(x).$

To show the closedness of $\mbox{spm}(x)$ take a sequence
$(a_{n})_{n\in\mathbb{N}}\subset \mbox{spm}(x)$ such that
$a_{n}\stackrel{(o)}{\longrightarrow}a.$ Assume that $a\notin
\mbox{spm}(x).$ Then there exists $\pi\in\nabla, \pi\neq0$ such
that  $\pi(a e-x)\in \mbox{Inv}(\pi\mathcal{U}).$ From
$a_{n}\stackrel{(o)}{\longrightarrow}a,$ due to
Proposition~\ref{3}~(i) one can find $n\in\mathbb{N}$ such that
$\pi(a_{n} e-x)\in \mbox{Inv}(\pi\mathcal{U}).$ This contradicts
to $a_{n}\in \mbox{spm}(x).$ So, $a\in \mbox{spm}(x).$

Finally let us take an arbitrary element  $a\in \mbox{spm}(x).$
 Suppose  that the set
 $$
 A=\{\omega\in\Omega: |a(\omega)|>\|x\|(\omega)\}
 $$
 has a positive measure. Due
 to Proposition~\ref{3}~(i) we conclude that $\chi_{A}(a e-x)$ is invertible in
 $\chi_{A}\mathcal{U}.$ But this contradicts to $a\in \mbox{spm}(x).$ Hence
 $\chi_{A}=0$
and  $|a|\leq \|x\|,$ which implies the boundedness of
 $\mbox{spm}(x).$ The proof is complete.
    \end{proof}

\section{Applications}

Next we shall prove a vector version of Gelfand--Mazur's Theorem.

\begin{thm}\label{6}   Let $\mathcal{X}$ be a measurable bundle of banach
algebras over $\Omega$ with a lifting. If every element of the
algebra $E(\Omega,  \mathcal{X})$ with unit support is invertible,
then $E(\Omega,  \mathcal{X})$ is isometrically isomorphic to $E.$
\end{thm}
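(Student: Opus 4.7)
The plan is to show that the natural embedding $\phi: E \to E(\Omega, \mathcal{X})$ defined by $\phi(a) = ae$ (where $e$ is the unit) is a surjective isometric algebra isomorphism. That $\phi$ is linear, multiplicative ($(ae)(be) = abe$ since $e^{2} = e$), and $E$-isometric with $\|\phi(a)\| = |a|\|e\| = |a|$ is immediate; injectivity then follows.

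The substance of the proof is surjectivity: given $x \in E(\Omega, \mathcal{X})$, produce $a \in E$ with $x = ae$. I apply Theorem~\ref{5} to choose $a \in \mbox{spm}(x)$, and claim $y := ae - x = 0$. By the definition of $\mbox{spm}$, for every nonzero $\pi \in \nabla$ one has $\pi y \notin \mbox{Inv}(\pi\mathcal{U})$. A key preliminary observation is that the hypothesis of Theorem~\ref{6} passes to each cut-down $\pi\mathcal{U}$: if $w \in \pi\mathcal{U}$ has unit support $\pi$ in $\pi\mathcal{U}$, then $w + \pi^{\perp}e$ has unit support $\mathbf{1}$ in $\mathcal{U}$, hence is invertible by hypothesis, and one checks that $\pi(w + \pi^{\perp}e)^{-1}$ is a two-sided inverse of $w$ in $\pi\mathcal{U}$. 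Applying this to $\pi y$: since it is not invertible in $\pi\mathcal{U}$, it cannot have unit support in $\pi\mathcal{U}$, so there exists $\tilde\pi \in \nabla$ with $0 \neq \tilde\pi \leq \pi$ and $\tilde\pi y = 0$.

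Set $S = \{\tilde\pi \in \nabla : \tilde\pi y = 0\}$. The previous paragraph shows $S$ is order-dense in $\nabla$. Moreover, $S$ is closed under arbitrary suprema, because $\tilde\pi y = 0$ is equivalent to $\tilde\pi\|y\| = 0$ in $L^{0}(\Omega)$ and $\nabla$ is Dedekind-complete. Order-density forces $\bigvee S = \mathbf{1}$ (otherwise the nonzero idempotent $(\bigvee S)^{\perp}$ would meet $S$ only in $0$, contradicting density), hence $\mathbf{1} \in S$ and $y = 0$. This gives $x = ae$, and $a \in E$ because $E$ is solid while $|a| = \|ae\| = \|x\| \in E$.

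The step I expect to be most delicate is the passage of the hypothesis from $\mathcal{U}$ to each $\pi\mathcal{U}$, where one has to be careful about what ``unit support'' means in the cut-down algebra and to construct the local inverse by multiplication by $\pi$. The concluding density-to-supremum closure argument is morally the classical Gelfand--Mazur conclusion ``$\lambda I - x = 0$'' carried out one idempotent piece at a time, leveraging Theorem~\ref{5} and the Boolean structure of $\nabla$.
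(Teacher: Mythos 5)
Your proposal is correct and follows essentially the same route as the paper: both choose $a\in\mbox{spm}(x)$ via Theorem~\ref{5}, pass the unit-support hypothesis to cut-downs by adding $\pi^{\perp}e$, and conclude $ae-x=0$, the only difference being that the paper applies this once directly to the support $e_x$ of $ae-x$ rather than running your order-density argument over all nonzero $\pi$. The concluding isometry and multiplicativity computations coincide with the paper's.
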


\begin{proof} Let $x\in E(\Omega,  \mathcal{X}).$ According to Theorem~\ref{5}
there exists $a_{x}\in \mbox{spm}(x).$ Let $e_{x}$ be the support
of $a_{x}e-x,$ i.e.  $e_{x}$ is an indicator function of a
measurable set $\{\omega\in\Omega:\|a_{x}e-x\|(\omega)\neq0\}.$
The element $e_{x}(a_{x}e-x)+e_{x}^{\perp}e$ has unit support, and
therefore, it is invertible, i.e. one finds $z\in E(\Omega,
\mathcal{X})$ such that
$$
(e_{x}(a_{x}e-x)+e_{x}^{\perp}e)z=e.
$$
 Whence
$e_{x}(a_{x}e-x)\in \mbox{Inv}(e_{x}E(\Omega,  \mathcal{X})).$ By
$a_{x}\in \mbox{spm}(x)$ one gets $e_{x}=0.$ This implies that
$a_{x}e-x=0,$ i.e. $a_{x}e=x.$ Due to
$$
\|x\|=|a_{x}|\|e\|=|a_{x}|,
$$ we obtain the mapping
$x\mapsto a_{x}$ is an isometry from $E(\Omega,  \mathcal{X})$
onto $E.$ For every $x,y\in E(\Omega,  \mathcal{X})$ one has
$xy=a_{x}ea_{y}e=a_{x}a_{y}e.$ Hence, the correspondence $x\mapsto
a_{x}$ is isometrically isomorphism from $E(\Omega,  \mathcal{X})$
onto $E.$ The proof is complete.
\end{proof}

Next we are going to prove an other vector version of
characterization of the field $\mathbb{C}$ in the setting Banach
algebras (see \cite[Theorem 10.19]{rud}).

\begin{thm}\label{Ma}  Let $\mathcal{X}$ be a measurable bundle of banach
algebras over $\Omega$ with a lifting. If there exists $m\in E$
such that  $\|x\|\|y\|\leq m\|xy\|$ for all $x,y\in E(\Omega,
\mathcal{X})$ then $E(\Omega, \mathcal{X})$ is isometrically
isomorphic to $E.$
\end{thm}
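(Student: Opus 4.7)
The plan is to transfer the multiplicative lower bound to each fibre $X(\omega)$ via the vector-valued lifting $\ell_{\mathcal X}$ and then apply the classical Gelfand--Mazur theorem fibrewise, concluding that the bundle is trivial. Assume throughout that $E(\Omega,\mathcal{X})$ is unital, as is needed for the spectrum theory of Section~2 to be in force. Fix $\omega\in\Omega$ and arbitrary $a,b\in X(\omega)$. By property~6 of the vector-valued lifting choose $\hat u,\hat v\in L^{\infty}(\Omega,\mathcal{X})$ with $\ell_{\mathcal X}(\hat u)(\omega)=a$ and $\ell_{\mathcal X}(\hat v)(\omega)=b$. Since $m\in E$ is not a priori bounded, introduce the truncations $\pi_k=\chi_{\{m\leq k\}}\in\nabla$; on each band $\pi_k$ the quantities $\pi_k m$, $\pi_k\|\hat u\|$, $\pi_k\|\hat v\|$, and $\pi_k\|\hat u\hat v\|$ all lie in $L^{\infty}(\Omega)$, and the hypothesis restricts to $\pi_k\|\hat u\|\|\hat v\|\leq\pi_k m\|\hat u\hat v\|$. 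Applying the lifting $p$ and using its multiplicativity on $L^{\infty}(\Omega)$ gives
$$
p(\pi_k)(\omega)\,\|a\|_{X(\omega)}\,\|b\|_{X(\omega)}\leq p(\pi_k)(\omega)\,p(m)(\omega)\,\|ab\|_{X(\omega)}.
$$
Because $\bigvee_k\pi_k=\mathbf{1}$, for almost every $\omega$ some $k$ satisfies $p(\pi_k)(\omega)=1$, producing the fibrewise inequality $\|a\|_{X(\omega)}\|b\|_{X(\omega)}\leq p(m)(\omega)\|ab\|_{X(\omega)}$.

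For almost every $\omega$ the fibre $X(\omega)$ is then a unital complex Banach algebra (unital by the last assertion of Theorem~\ref{mbba}) satisfying $\|ab\|\geq c_{\omega}^{-1}\|a\|\|b\|$ with $c_{\omega}=p(m)(\omega)<\infty$. Given $x\in X(\omega)$, pick $\lambda\in\partial\,\mbox{sp}(x)$; the classical fact that boundary points of the spectrum are topological divisors of zero furnishes $(z_n)\subset X(\omega)$ with $\|z_n\|=1$ and $(\lambda e_{\omega}-x)z_n\to 0$. But the lower bound yields $\|(\lambda e_{\omega}-x)z_n\|\geq c_{\omega}^{-1}\|\lambda e_{\omega}-x\|$, forcing $x=\lambda e_{\omega}$. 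Hence $X(\omega)=\mathbb{C}\cdot e_{\omega}$, and since $\|e_{\omega}\|=p(\mathbf{1})(\omega)=1$, the map $\lambda e_{\omega}\mapsto\lambda$ is an isometric algebra isomorphism $X(\omega)\to\mathbb{C}$.

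Each measurable section $u$ therefore admits the unique representation $u(\omega)=f_u(\omega)\,e_{\omega}$ with $|f_u(\omega)|=\|u(\omega)\|_{X(\omega)}$; approximating $u$ by simple sections, together with the observation that the unit $e$ is itself a measurable section, shows $f_u$ is measurable, hence $f_u\in E$. The correspondence $\hat u\mapsto f_u$ is linear, multiplicative, and norm-preserving, so it gives the desired isometric algebra isomorphism $E(\Omega,\mathcal{X})\to E$. The principal obstacle is Step~1: because $m$ need not lie in $L^{\infty}(\Omega)$, one cannot apply the homomorphism $p$ directly to the full inequality and must thread the projections $\pi_k$ carefully; a minor secondary issue is the measurability of the scalar coordinate $f_u$, which is routine once one knows that every element of the defining set $L$ is fibrewise a scalar multiple of $e$.
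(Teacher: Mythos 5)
Your proposal is correct and follows essentially the same route as the paper: truncate $m$ onto bands where it is bounded, push the inequality $\|x\|\|y\|\leq m\|xy\|$ down to each fibre $X(\omega)$ via the liftings $p$ and $\ell_{\mathcal{X}}$, conclude fibrewise that $X(\omega)\cong\mathbb{C}$, and reassemble the global isometric isomorphism with $E$. The only cosmetic differences are that you reprove the scalar characterization of $\mathbb{C}$ (via topological divisors of zero at the boundary of the spectrum) where the paper simply cites Theorem 10.19 of Rudin, and that you build the final isomorphism by hand where the paper invokes Theorem~\ref{mbba}.
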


\begin{proof} Let us consider the following two cases.

{\sc Case 1}. Let  $m\in L^\infty(\Omega).$ Then there exists
$c\in\mathbb{R}$ such that  $m\leq c\textbf{1}$. Hence
\begin{equation*}
\|x\|\|y\|\leq c\|xy\|
\end{equation*}
for all  $x,y\in E(\Omega, X).$

Let us fix a point $\omega\in\Omega.$  Applying the lifting $p$ on
$L^{\infty}(\Omega)$ to the last inequality  we obtain
\begin{equation*}
p(\|x\|)(\omega)p(\|y\|)(\omega)\leq cp(\|xy\|)(\omega).
\end{equation*}
Taking into account this inequality  and property 6 of
$\ell_{\mathcal{X}}$ we get
\begin{equation*}
\|\ell_{\mathcal{X}}(x)(\omega)\|_{X(\omega)}\|\ell_{\mathcal{X}}(y)(\omega)\|_{X(\omega)}\leq
c\|\ell_{\mathcal{X}}(x)(\omega)\ell_{X}(y)(\omega)\|_{X(\omega)}.
\end{equation*}
This implies that
$$\|x_{\omega}\|_{X(\omega)}\|y_{\omega}\|_{X(\omega)}\leq
c\|x_{\omega}y_{\omega}\|_{X(\omega)}
$$
holds for all $x_{\omega},y_{\omega}\in X(\omega).$ According to
Theorem 10.19 \cite{rud} we conclude that $X(\omega)$ is
isomorphic to $\mathbb{C}$. Now Theorem~\ref{mbba} yields that
$E(\Omega, \mathcal{X})$ is isomorphic to $E.$ Hence, for each
$x\in E(\Omega, \mathcal{X})$ one finds $a_{x}\in E$ such that
$x=a_{x}e$. The same argument as in the proof of Theorem~\ref{Ma}
one can show that the correspondence $x\mapsto\lambda_{x}$ is
isometrically isomorphism from $E(\Omega, \mathcal{X})$ onto $E.$

{\sc Case 2.} Let  $m\in E$ be arbitrary.  Putting $x=y=e$ to the
inequality $\|x\|\|y\|\leq m\|xy\|$ implies that
$m\geq\textbf{1}.$ For each $n\in\mathbb{N}$ we put
$$
\Omega_{n}=\{\omega\in\Omega: n\leq
m(\omega)<n+1\},\,\pi_{n}=\chi_{\Omega_{n}}.
$$
Then $\bigvee\limits_{n=1}^{\infty}\pi_{n}=\textbf{1}$ and
$\pi_{n}m\leq\pi_{n}(n+1)$ for all  $n\in\mathbb{N}$. Hence, for
every  $x,y\in E(\Omega, \mathcal{X}),$  $n\in\mathbb{N}$ one has
$$
\pi_{n}\|x\|\|y\|\leq \pi_{n}(n+1)\|xy\|.
$$
The Case 1 yields that $\pi_{n} E(\Omega, \mathcal{X})$ is
isometrically isomorphic to $\pi_{n}E.$ Due to construction we
have $\bigvee\limits_{n=1}^{\infty}\pi_{n}=\textbf{1}$ and
$\pi_{i}\pi_{j}=0$ ($i\neq j$), which implies that $E(\Omega,
\mathcal{X})$ is isometrically isomorphic to $E.$ The proof is
complete.
\end{proof}

\section*{Acknowledgments}

The first (I.G) author  acknowledges  the MOHE Grant
FRGS13-071-0312. The second (F.M) and third (K.K.) named authors
 acknowledge the MOHE grant ERGS13-024-0057 for support. K.K. also thanks
International Islamic University Malaysia for kind hospitality and
providing all facilities.

\end{document}